\numberwithin{equation}{section}
\newtheorem{thm}{Theorem}[section]
\newtheorem{prop}[thm]{Proposition}
\newtheorem{lem}[thm]{Lemma}
\newtheorem{conj}[thm]{Conjecture}
\theoremstyle{definition}
\theoremstyle{remark}
\newtheorem{rmk}[thm]{Remark}
\newcommand{\OG}{\mathrm{OG}}
\newcommand{\Km}{\mathrm{Kum}}
\def\Z{{\mathbb Z}}
\def\R{{\mathbb R}}
\def\Q{{\mathbb Q}}
\def\C{{\mathbb C}}
\def\X{{\tilde X}}
\newcommand{\IP}{\mathbb{P}}
\newcommand{\IC}{\mathbb{C}}
\newcommand{\IR}{\mathbb{R}}
\newcommand{\IZ}{\mathbb{Z}}
\newcommand{\frM}{\mathfrak{M}}
\def\Aut{\mathop{\rm Aut}\nolimits}
\def\Def{\mathop{\rm Def}\nolimits}
\def\Hom{\mathop{\rm Hom}\nolimits}
\def\lra{\longrightarrow}
\def\NS{\mathop{\rm NS}\nolimits}
\def\Pic{\mathop{\rm Pic}\nolimits}
\def\tilde{\widetilde}
\def\phi{\varphi}
\def\D{{\tilde D}}
\def\t{{\tilde \tau}}
\def\Eff{\mathop{\rm Eff}\nolimits}
\def\Nef{\mathop{\rm Nef}\nolimits}
\renewcommand\int{\mathop{\rm int}}
\DeclareMathOperator{\id}{id}
\DeclareMathOperator{\diag}{diag}
\def\Eff{\mathop{\rm Eff}\nolimits}
\def\Pic{\mathop{\rm Pic}\nolimits}
\def\dim{\mathop{\rm dim}\nolimits}
\def\rk{\mathop{\rm rk}\nolimits}
\def\GL{\mathop{\rm GL}\nolimits}
\def\Mov{\mathop{\rm Mov}\nolimits}
\def\Bir{\mathop{\rm Bir}\nolimits}
\def\Jac{\mathop{\rm Jac}\nolimits}
\newcommand{\supth}[1]{\ensuremath{#1^{\mathrm{th}}}}
\newcommand{\suprd}[1]{\ensuremath{#1^{\mathrm{rd}}}}
\title{On the cone conjecture for Enriques manifolds}
\author{Gianluca Pacienza}
\address{Universit\'e de Lorraine, CNRS, IECL, 
F-54000 Nancy, France}
\email{gianluca.pacienza@univ-lorraine.fr}
\author{Alessandra Sarti}
\address{Laboratoire de Mathématiques et Applications, CNRS et Université de Poitiers, 86073 Poitiers Cedex 9, France}
\email{Alessandra.Sarti@math.univ-poitiers.fr}
\begin{document}



\maketitle

\begin{prelims}

\DisplayAbstractInEnglish

\bigskip

\DisplayKeyWords

\medskip

\DisplayMSCclass

\end{prelims}


\newpage

\setcounter{tocdepth}{1}

\tableofcontents


%
\section{Introduction}
%
An Enriques manifold is a connected complex manifold $X$ which is not simply connected and whose universal cover $\X$ is an irreducible holomorphic symplectic (IHS) manifold. Enriques manifolds were simultaneously and independently introduced in \cite{BNWS} and \cite{OS1} as a natural generalization of Enriques surfaces. The fundamental group of an Enriques manifold is a cyclic and finite group $G=\langle g\rangle$; its order is called the {\it index} of $X$; it is also the order of the torsion canonical class $K_X\in \Pic(X)$. From the definition it follows that an Enriques manifold is even dimensional. Moreover, as $X=\X/G$, it is compact and, since $h^{2,0}(X)=h^{0,2}(X)=0$ (\textit{cf.}~\textit{e.g.}~\cite[Proposition 2.6]{OS1}), it turns out that an Enriques manifold is always projective (\textit{cf.}  \cite[Proposition 2.1(4)]{BNWS} and \cite[Corollary 2.7]{OS1}) and so is its universal cover.  In those papers examples of Enriques manifolds of index 2, 3 and 4 are constructed (these are still the only known examples so far), while their periods are studied in \cite{OS2}.

The Morrison--Kawamata cone conjecture ({see \cite{Mor, Ka}) concerns the action of the automorphism group of manifolds  (and more generally mildly singular pairs) with numerically trivial canonical class on the cone of rational nef classes and predicts the existence of a rational, polyhedral fundamental domain for such action.
  
\begin{conj}[Morrison--Kawamata cone conjecture]\label{conj:cone}
Let $X$ be a  projective manifold with numerically trivial canonical class.
\begin{enumerate}[label={\rm(\arabic*)}, ref={\rm\arabic*}]
\item\label{c:cone-1} 
There is a rational polyhedral  cone $\Pi$ which is a fundamental domain for the
action of $\Aut(X)$ on  the convex hull $\Nef^+(X)$ of\, $\Nef(X)\cap N^1(X)_{\Q}$ in $N^1(X)_{\R}$; i.e. 
\begin{enumerate}[label={\rm(\alph*)}, ref={\rm\alph*}]
\item $\Nef^+(X)= \cup_{g\in \Aut(X)} g^*\Pi$, 
\item $\int (\Pi) \cap \int (g^*\Pi) = \emptyset$ unless $g^* = \id$ in $\GL(N^1(X)_\R)$.
\end{enumerate}
\item\label{c:cone-2} 
There is a rational polyhedral  cone $\Pi'$ which is a fundamental domain in the sense above for the
action of $\Bir(X)$ on the convex hull ${\Mov}^+(X)$ of\, $\overline{\Mov}(X)\cap N^1(X)_{\Q}$ inside $N^1(X)_{\R}$. 
\end{enumerate}
\end{conj}

Item~\eqref{c:cone-2} above is also known as the birational cone conjecture.  Notice that in the literature there is also a version of the conjecture where $\Nef^+(X)$ is replaced by $\Nef^e(X):=\Nef(X)\cap \Eff(X)$ (the same for ${\Mov}^+(X)$, which is replaced by $\overline{\Mov}^e(X):=\overline{\Mov}(X)\cap \Eff(X)$).  This statement is analogous to the classical one for the four known deformations types of IHS manifolds (and equivalent in general modulo the SYZ-conjecture; \textit{cf.}~\cite[Remark 1.4]{MY}).  The conjecture (we will not specify which version of it and refer the interested readers to the papers we quote) has been proved in dimension 2 by Sterk--Looijenga, Namikawa, Kawamata, and Totaro (see \cite{Ste,Na, Ka, Tot}), by Prendergast-Smith \cite{PS} for abelian varieties, by Amerik--Verbitsky \cite{AV1,AV2} for IHS manifolds, building upon the birational cone conjecture established by Markman in \cite{Mark-survey}, see also Markman--Yoshioka \cite{MY}, and by
Lehn--Mongardi--Pacienza \cite{LMP} for singular IHS varieties.  For Calabi--Yau varieties the conjecture is open in general. For recent results in this direction, see \cite{GLW22, GLSW24} and the references therein.  We refer the reader to \cite{Tot-survey,LOP18} for nice introductions to this topic. The conjecture is deeply related to birational geometry. Item~\eqref{c:cone-1} of the conjecture yields the finiteness, up to automorphisms, of birational contractions and fiber space structures of the initial variety, while item~\eqref{c:cone-2} implies, modulo standard conjectures of the MMP, the finiteness of minimal models, up to isomorphisms, of any $\Q$-factorial and terminal variety with non-negative Kodaira dimension (\textit{cf.}~\cite[Theorem 2.4]{CL14}).

By the Beauville--Bogomolov decomposition theorem \cite{Beau83}, we know that any $K$-trivial manifold $V$ admits a finite \'etale cover $\tilde V\to V$, where $\tilde V$ is a product of Calabi--Yau manifolds, IHS manifolds and an abelian variety. A general question is: suppose we know Conjecture~\ref{conj:cone} for $\tilde V$, can we deduce it for $V$?  Our main result provides a positive answer when we have only one factor of IHS type and the IHS has in some sense the smallest possible Picard group. More precisely, we show the following.

\begin{thm}\label{identity}
Let $X=\X/G$ be an Enriques manifold, where $G=\langle g\rangle$ is a finite cyclic group acting freely on an IHS manifold $\X$.  Assume that the action of\, $G$ on $\Pic(\X)$ is the identity or, equivalently, that $\Pic(\X)=H^2(\X,\IZ)^G$.  Then there is a rational and polyhedral cone which is a fundamental domain for the action of\, $\Aut(X)$ $($resp.\ of $\Bir(X))$ on $\Nef^+(X)$ $($resp.\ on $\Mov^+(X))$.
\end{thm}

For the equivalence mentioned in the hypotheses of Theorem~\ref{identity}, we refer the reader to Lemma~\ref{lem:inc}.  We put together in the following statement the cases in which the hypothesis $\Pic(\X)=H^2(\X,\IZ)^G$ can be verified, and the consequences that can be drawn. Notice that all the known examples of Enriques manifolds are covered by the statement.

\begin{thm}\label{thm:main}
Let $X=\X/G$ be an Enriques manifold, where $G=\langle g\rangle$ is a cyclic group of order $d$ acting freely on an IHS manifold $\X$. Conjecture~\ref{conj:cone}, items~\eqref{c:cone-1} and~\eqref{c:cone-2}, hold in the following cases:
\begin{enumerate}[label={\rm(\roman*)}, ref={\rm\roman*}]
\item\label{t:main-1} The  integer $d$ is prime, and the  Enriques manifold $X$ is  very general in the moduli space. 
\item\label{t:main-2} The manifold $X$ is a very general deformation of the index~$4$ examples provided in  \cite{BNWS, OS1}.  
\item\label{t:main-3} The manifold $\X$ is of\, $K3^{[n]}$ type $($resp.\ of\, $\Km_n$ type$)$, and the index $d$ is one of\, $13,17,19, 23, 46$ $($resp.\ one of\, $5,7,9,14,18)$.
\end{enumerate}
\end{thm}

In Theorem~\ref{thm:main} the conjecture follows from Theorem~\ref{identity} and the fact that in those cases we can prove that $\Pic(\X)=H^2(\X,\IZ)^G$.  

As for the proof of Theorem~\ref{identity}, if $\pi\colon \X\to X=\X/G$ is the covering map, by Amerik--Verbitsky \cite{AV1,AV2} there exists a rational polyhedral convex cone $\D$ which is a fundamental domain for the action of $\Aut(\X)$ on $\Nef^{+}(\X)$. We set $ D:=\D\cap \pi^* N^1(X)_{\mathbb R}.  $ Recall that $N^1(X)$ denotes the N\'eron--Severi group of $X$, which coincides with the Picard group of $X$ (after tensoring by $\mathbb Q$ or $\mathbb R$).  The proof of Theorem~\ref{identity} then consists in showing that $ \pi_*(D)$ yields a fundamental domain for the action of $\Aut(X)$ on $\Nef^{+}(X)$: the rationality and polyhedrality of $D$ are implied by those of $\D$.  The main point is to show that if $\xi\in \pi^* N^1(X)_{\mathbb R}$ and $\phi$ is an automorphism of $\X$ such that $\phi^*(\xi)\in \D$, then $\phi$ ``descends'' to an automorphism of $X$, \textit{i.e.}~$\phi$ commutes with $G$. To show this commutativity, we check it on cohomology; this is where we use the
assumption $\Pic(\X)=H^2(\X,\IZ)^G$.  Some care has to be taken, as it is well known that there are non-trivial automorphisms acting trivially on cohomology for the two deformation classes coming from abelian surfaces (see Remark~\ref{rmk:ker} for details).  As for the proof of Theorem~\ref{thm:main}, item~\eqref{t:main-1} is proved in Proposition~\ref{prop:gen}, after we have recalled the construction of moduli spaces of marked Enriques manifolds, while items~\eqref{t:main-2} and~\eqref{t:main-3} are the content of several propositions in Sections~\ref{ss:notprime} and~\ref{sec:4.2}.

We conclude the introduction by mentioning some developments after the appearance of our work and closely related to it. 
Firstly, motivated by the approach followed in this paper, Monti and Quedo have proved the cone conjecture for all \'etale quotients of abelian varieties; see~\cite{MQ}. More recently, Gachet \cite{G25}  proved in particular the cone conjecture for abstract Enriques manifolds, regardless of their deformation type or index. Finally, the preprint \cite{BGGG} answers negatively a question raised in this paper about the possibility of constructing Enriques manifolds
from $\OG 10$ manifolds. Hence, despite the fact that the results of the present paper are now much weaker than those obtained in  \cite{G25}, we believe that they served both as a motivation and as  evidence for the cone conjecture for Enriques manifolds and revived the interest towards this class of varieties and their singular analogues; see \textit{e.g.}~\cite{BCS24,DRTX,DTX}.

\subsection*{Acknowledgments}
We thank A.~Chiodo for his kind invitation to Jussieu in December 2021, where this project started, and L.~Kamenova, G.~Mongardi and A.~Oblomkov, the organizers of the workshop ``Hyperk\"ahler quotients, singularities and quivers'' at the Simons Center for Geometry and Physics in January 2023, where the project was continued. We thank S.~Boissi\`ere, C.~Bonnaf\'e, E.~Floris, A.~Garbagnati, E.~Markman and G.~Mongardi for very useful exchanges. Finally we warmly thank the referees whose extremely detailed reports helped us to correct several mistakes and improve the readability of the paper.

%
\section{Preliminaries}
%

\subsection{Cones}\label{subsec:cones}
Let $V$ be a real vector space of finite dimension. We suppose that $V$ has a distinguished $\mathbb Q$-structure, \textit{i.e.}~there exists a rational vector space $V_{\mathbb Q}$ such that $V= V_{\mathbb Q}\otimes_{\mathbb Q}\mathbb R$.  A cone is a subset $C\subset V$ such that for all $\lambda \in \mathbb R_{>0}$ and all $x \in C$ we have $\lambda x\in C$.  It is called polyhedral (resp.\ rational polyhedral) if it is a closed convex cone generated by a finite number of vectors (resp.\ rational vectors). It is non-degenerate if it contains no line. If $C\subset V$ is an open convex cone, we denote by $C_+$ the convex hull of $\overline {C}\cap V_{\mathbb Q}$ in $V$.  Let $\Gamma$ be a group and $\rho\colon\Gamma\to \GL(V)$ be a group homomorphism. Suppose that the action of $\Gamma$ on $V$ (via $\rho$) leaves an open convex cone $C\subset V$ invariant. In this case we say that $(C_+,V)$ is of polyhedral type if there is a polyhedral cone $\Pi\subset C_+$ such that $\Gamma\cdot \Pi
\supset C$.

\begin{lem}[\textit{cf.} {\cite[Theorem~3.8 and Application~4.14]{Loo14}}]\label{lem:loo}
Under the notation and assumptions above, suppose that $\rho\colon\Gamma\to \GL(V)$ is injective and that the cone $C$ is non-degenerate. If\, $(C_+, \Gamma)$ is of polyhedral type, $C_+$ admits a rational polyhedral fundamental domain for the action of\, $\Gamma$.
\end{lem}

\subsection{Basic facts on IHS manifolds}
An irreducible holomorphic symplectic (IHS) manifold is a compact K\"ahler manifold $\X$ which is simply connected and carries a holomorphic symplectic 2-form $\sigma$ such that $H^0(\X,\Omega^2_\X)=\C\cdot \sigma$. For a general introduction to the subject we refer to \cite{Huy-basic}.  Let $\X$ be an irreducible holomorphic symplectic manifold of dimension $2n \geq 2$. Let $\sigma\in H^0(\X,\Omega^2_\X)$ be such that $\int_{\X} \sigma^n \bar{\sigma}^{n}=1$.  Then, following \cite{Beau83}, the second cohomology group $H^2(\X,\mathbb C)$ is endowed with a quadratic form $q=q_{\X}$ which is non-degenerate and, up to a positive multiple, is induced by an integral non-divisible quadratic form on $H^2(\X,\mathbb Z)$ of signature $(3, b_{2}(\X)-3)$. The form $q$ is called the Beauville--Bogomolov--Fujiki quadratic form of $\X$.  By Fujiki \cite{Fuj} there exists a positive rational number $c=c_\X$ (the Fujiki constant of $\X$) such that $c \cdot q^{n}(\alpha) = \int_{\X}\alpha^{2n},\ \forall \alpha \in H^2(\X,\mathbb Z)$.  For an IHS manifold $\tilde X$ we have $\Pic(\tilde X)=\NS(\tilde X)$, and we will
use both notations interchangeably.

\begin{rmk}\label{rmk:ker}
  If $\X$ is an IHS manifold, the kernel of $\nu\colon\Aut(\X)\rightarrow {\rm O}(H^2(\X,\IZ))$
  is finite, by \cite[Proposition~9.1]{Huy-basic}.  Notice moreover that by a result of Hassett and Tschinkel \cite[Theorem 2.1]{HT}, the kernel of   $\nu$ is invariant under smooth deformations of the manifold  $\X$. This allows us to
 compute it for all four known deformation types of IHS manifolds, thanks to \cite[Proposition 10]{Beau83},  \cite[Corollary 3.3]{BNWS} and \cite[Theorems 3.1 and 5.2]{MW}. We have that $\nu$ is injective for deformations of punctual Hilbert  schemes of $K3$ surfaces and $\OG 10$ manifolds, while the kernel is generated by the group of translations by points of order $n$ on an abelian surface and by $-\id$  (respectively, equal to $(\IZ/2\IZ)^{\times 8}$) if $\X$ is a deformation of a generalized Kummer of dimension $2n$ (respectively, of an $\OG 6$ manifold).  
\end{rmk}

\subsection{Basic facts  on Enriques manifolds}\label{sec:2.3}

Throughout this section, $X$
denotes an arbitrary Enriques manifold, $\X$  its IHS universal cover and $G$
its cyclic and finite fundamental group.
Now consider the quotient $\pi\colon \X\to X=\X/G$, where $|G|=d$. Recall that in general the  action of an automorphism $\phi\in \Aut(\X)$ of finite order is, respectively, symplectic, non-symplectic, or purely non-symplectic  if $\phi$ acts on the symplectic holomorphic 2-form by multiplication by, respectively, one, a  root of unity, or a root of unity of the same order as $\phi$.
In our case, observe that the action of any $g\in G$ must be  purely non-symplectic, 
as, if $G$  contained symplectic automorphisms, there would be  points with a non-trivial stabilizer and the covering would not be \'etale (see \cite[Section 2.2]{BNWS}).  Observe that if $G$ is of prime order, then a non-symplectic action is the same as a purely non-symplectic one. 
If we denote the generator of $G$ by $g$,  we have that $g^*\sigma=\lambda \sigma$ with $\lambda$ a primitive $\supth{d}$ root of unity.
Recall that the group of automorphisms (resp.\ of the birational transformations) of $X$ may be identified with the quotient by $G$ of the normalizer group of $G$ in $\Aut(\X)$ (resp.\ in $\Bir(\X)$); \textit{i.e.} 
\begin{equation}\label{eq:aut}
\Aut(X) =\left\{\t\in \Aut\left(\X\right): \t\circ G\circ \t^{-1}=G \right\}/G
\end{equation}
and 
\begin{equation}\label{eq:bir}
\Bir(X) =\left\{\t\in \Bir\left(\X\right): \t\circ G\circ \t^{-1}=G \right\}/G.
\end{equation} 
Indeed, if $\t\in \{\t\in \Aut(\X): \t\circ G\circ \t^{-1}=G \}/G$, then one obviously recovers an automorphism of $X$. For the other inclusion, if $\tau \in \Aut(X)$, then, by the universal property of the universal cover, the morphism $\tau\circ \pi$ factorizes through $\X$, namely there exists a morphism $\t:\X\to \X$ sitting in the following commutative diagram: 
$$
\xymatrix{
\X \ar[d]^\pi \ar[r]^\t &\X\ar[d]^\pi\\
X \ar[r]^\tau &X\rlap{.}}
$$
By construction $\t$ is bijective and yields the lifting of the automorphism $\tau$.
For birational transformations again one inclusion is obvious. For the other inclusion, if $\tau \in \Bir(X)$, then the largest open subsets $U_1, U_2 \subset X$ over which $\tau\colon U_1\rightarrow U_2$ is an isomorphism have complement of codimension at least $2$. The same is then true for $\pi^{-1}(U_1)=:\tilde U_1$ and $\pi^{-1}(U_2)=:\tilde U_2$. Since $\pi_1(\tilde U_j)=\pi_1(\tilde X)$ for $j=1,2$,   we have that $\tilde U_j$ for $j=1,2$ is simply connected, and the argument given before works the same and yields an isomorphism $\t\colon \tilde U_1\rightarrow \tilde U_2$ lifting $\tau$, which we see as $\t\in \Bir(\X)$. 

\begin{lem}\label{lem:inc}
For any Enriques manifold $X$ we have $H^2(X,\Z)= \Pic(X)$. Moreover, we have $\pi^*H^2(X,\Z)=H^2(\X,\Z)^G\subset \Pic(\X)$.  In particular, 
$$\dim_\R \pi^* N^1(X)_\R=\dim_\R \pi^* \Pic(X)_\R= \rk H^2\left(\X,\Z\right)^G.$$
\end{lem}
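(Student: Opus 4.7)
The plan is to handle the three assertions in turn, using the exponential sequence, the Cartan--Leray (Hochschild--Serre) spectral sequence for the free $G$-action $\pi\colon\X\to X$, and a Hodge theoretic observation about the character of $G$ on $\sigma$.

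For the first assertion, I would read $\Pic(X)=H^2(X,\Z)$ off the exponential sequence
$$H^1(X,\cO_X)\to H^1(X,\cO_X^{*})\to H^2(X,\Z)\to H^2(X,\cO_X).$$
For any finite \'etale cover Hodge numbers identify as $h^{p,0}(X)=\dim H^{p,0}(\X)^G$, so $h^{1,0}(X)=0$ (because $\X$ is simply connected) and $h^{2,0}(X)=0$ (because $G$ acts on $H^{2,0}(\X)=\C\sigma$ by the nontrivial character $\lambda$, the action being purely non-symplectic). Hence the two flanking groups vanish and $\Pic(X)\cong H^2(X,\Z)$.

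For the second assertion, the free $G$-action on $\X$ yields a Cartan--Leray spectral sequence
$$E_2^{p,q}=H^p(G,H^q(\X,\Z))\Rightarrow H^{p+q}(X,\Z).$$
Since $H^1(\X,\Z)=0$, the row $q=1$ vanishes. For cyclic $G$ of order $d$ with trivial action on $\Z$ we have $H^1(G,\Z)=0$, $H^2(G,\Z)\cong\Z/d\Z$, and $H^3(G,\Z)=0$. The five-term sequence therefore collapses to
$$0\longrightarrow\Z/d\Z\longrightarrow H^2(X,\Z)\stackrel{\pi^{*}}{\longrightarrow} H^2(\X,\Z)^G\longrightarrow 0,$$
so $\pi^{*}H^2(X,\Z)=H^2(\X,\Z)^G$; the torsion kernel is generated by $K_X$, consistently with $\pi^{*}K_X=K_{\X}=0$. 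To place this image inside $\Pic(\X)$, I would invoke the fact that $G$ preserves the Hodge decomposition of $H^2(\X,\C)$ and acts on $H^{2,0}(\X)$ and $H^{0,2}(\X)$ by the nontrivial characters $\lambda$ and $\bar\lambda$; averaging therefore annihilates both summands, giving $H^2(\X,\C)^G\subset H^{1,1}(\X)$ and hence $H^2(\X,\Z)^G\subset H^2(\X,\Z)\cap H^{1,1}(\X)=\Pic(\X)$.

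For the dimension count, since $\pi^{*}$ kills exactly the torsion subgroup $\langle K_X\rangle\subset\Pic(X)$, the induced map $N^1(X)=\Pic(X)/\mathrm{torsion}\to\Pic(\X)$ is injective with image $\pi^{*}\Pic(X)=H^2(\X,\Z)^G$, and the equality of real dimensions follows. The only mildly technical step is the spectral sequence computation, and even that is unobstructed thanks to the simple connectedness of $\X$ and the cyclicity of $G$: the real content of the lemma is the Hodge theoretic identification $H^{2,0}(\X)^G=0$, which is forced by the purely non-symplectic nature of the cover.
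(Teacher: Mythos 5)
Your proposal is correct, but it reaches the lemma by a genuinely different route from the paper. For the equality $H^2(X,\Z)=\Pic(X)$ you use the exponential sequence together with $h^{1,0}(X)=h^{2,0}(X)=0$; the paper explicitly notes that this ``follows immediately from $H^{2,0}(X)=0$'' but instead runs a Beauville--Bogomolov computation $q(\pi^*\xi,\sigma)=\lambda\, q(\pi^*\xi,\sigma)$ with $\lambda\neq 1$, forcing $q(\pi^*\xi,\sigma)=0$ and hence $\pi^*\xi\in\Pic(\X)$ --- the authors keep this version precisely because the same computation later yields the inclusion $H^2(\X,\Z)^G\subset\Pic(\X)$. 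Your averaging argument on the Hodge decomposition (the invariants of $H^2(\X,\C)$ land in $H^{1,1}$ because $G$ acts on $H^{2,0}$ and $H^{0,2}$ by the nontrivial characters $\lambda$ and $\bar\lambda$) is an equivalent substitute for that step, so nothing is lost. The main divergence is the identity $\pi^*H^2(X,\Z)=H^2(\X,\Z)^G$: the paper simply observes the inclusion with finite index and cites Oguiso--Schr\"oer for the equality, whereas you derive it self-containedly from the Cartan--Leray spectral sequence, using the vanishing of the $q=1$ row (simple connectedness of $\X$) and of $H^3(G,\Z)$ for cyclic $G$ to get the short exact sequence $0\to\Z/d\Z\to H^2(X,\Z)\to H^2(\X,\Z)^G\to 0$. (Strictly, this needs the extended low-degree exact sequence rather than the five-term one, but the inputs you list --- vanishing of $E_2^{p,1}$ and of $E_2^{3,0}$ --- are exactly what makes the edge map surjective, so the argument is sound.) Your version buys independence from the reference and identifies the kernel of $\pi^*$ as the torsion $\Z/d\Z$ explicitly; the paper's version is shorter and recycles its $q$-computation for the final inclusion.
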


\begin{proof}
Consider $\xi\in H^2(X,\Z)$. We have 
$$
q(\pi^*\xi,\sigma)=q(g^*\pi^*\xi,g^*\sigma)=q(\pi^*\xi,\lambda \sigma)=\lambda q(\pi^*\xi, \sigma)
$$
with $\lambda\not=1$ as $g$ is not a symplectic automorphism. 
Then $q(\pi^*\xi,\sigma)=0$, which implies that $\pi^*\xi\in \Pic(\X)$, as $\Pic(\X)=\sigma^{\perp}\cap H^2(\X,\Z)$. In particular, we have that $\xi \in \Pic(X)$. We have shown that $H^2(X,\IZ)\subset \Pic(X)\subset H^2(X,\IZ)$, so we get the first equality (notice that this equality also follows immediately from the fact that $h^{2,0}(X)=0$). 
By construction $\pi^*H^2(X,\Z)\subset H^2(\X,\Z)^G$ with finite index,
and moreover by \cite[proofs of Propositions 2.8 and 5.1]{OS1} we in fact get equality. Finally, since $\pi^*\Pic(X)\subset \Pic(\X)$, the previous equality implies that $H^2(\X,\IZ)^G\subset \Pic(\X)$ (which is in fact a more general property of non-symplectic automorphisms acting on IHS manifolds, but this gives an easy proof).
\end{proof}

\subsection{Moduli spaces of marked Enriques manifolds}\label{S:moduli}
We follow the presentation in \cite{BCS}. See also \cite{OS2} for another equivalent approach and for the study of the period map.  Let $L$ be a lattice. Let $\X$ be an IHS manifold. Recall that a marking for $\X$ is an isometry $L\to H^2(\X,\Z)$.  Let $M$ be an even non-degenerate lattice of rank $\rho\geq 1$ and signature $(1,\rho-1)$.  An \emph{$M$-polarized} IHS manifold is a pair $(\X,j)$, where $\X$ is a projective IHS manifold and $j$ is a primitive embedding of lattices $j\colon M\hookrightarrow \NS(\X)$.  Two $M$-polarized IHS manifolds $(\X_1, j_1)$ and $(\X_2,j_2)$ are \emph{equivalent} if there exists an isomorphism $f\colon \X_1\to \X_2$ such that $j_1=f^*\circ j_2$. As in \cite[Section 10]{DK} and \cite{Dol96}, one can construct a moduli space of \emph{marked} $M$-polarized IHS manifolds as follows. We fix a primitive embedding of $M$ in $L$, and we identify $M$ with its image in $L$. A \emph{marking} of $(\X,j)$ is an isomorphism of lattices $\eta\colon L \to H^2(\X,\IZ)$
such that $\eta_{|M}=j$. As observed in \cite[Section~5, p.~2609]{Dol96}, 
if the embedding of $M$ in $L$ is unique up to an isometry of $L$, then every $M$-polarization admits a compatible marking.  Two $M$-polarized marked IHS manifolds $(\X_1,j_1, \eta_1)$ and $(\X_2, j_2, \eta_2)$ are \emph{equivalent} if there exists an isomorphism $f\colon \X_1\to \X_2$ such that $\eta_1=f^*\circ \eta_2$ (this clearly implies that $j_1=f^*\circ j_2$). Let $T:=M^{\perp}\cap L$ be the orthogonal complement of $M$ in $L$, and set
$$
\Omega_{M}:=\{x  \in \IP\left(T_\IC\right)\mid q(x)=0,\, q(x+\bar{x})>0\}.
$$   
Since $T$ has signature $(2,\rk(T)-2)$,  the period domain $\Omega_{M}$ is a disjoint union of two connected components of  dimension $\rk(T)-2$. For each $M$-polarized marked IHS manifolds $(X,j,\eta)$, 
since $\eta(M)\subset \NS(\X)$, we have $\eta^{-1}(H^{2,0}(\X))\in\Omega_M$. On the other hand, by the surjectivity of the period map, see \cite[Theorem~8.1]{Huy-basic}, 
restricted to any connected component $\mathfrak{M}^0_L$ of the moduli space $\mathfrak{M}_L$  of IHS manifolds with second integral cohomology isometric to a given lattice $L$, we can associate to each point $\omega\in\Omega_M$ an $M$-polarized IHS manifold as follows:
there exists a marked pair $(\X,\eta)\in\frM_L^0$ such that $\eta^{-1}(H^{2,0}(\X))=\omega \in \IP(T_\IC)$, so $M=T^{\perp}\cap L\subset \omega^{\perp}\cap L$; hence $\eta(M)\subset H^{2,0}(\X)^{\perp}\cap H^2(\X,\Z)=\NS(\X)$, and we take $(\X,\eta_{|M},\eta)$. 
By the local Torelli theorem for IHS manifolds, an $M$-polarized IHS manifold  $(\X,j)$ has a local deformation space $\Def_M(\X)$ that is contractible, smooth of dimension $\rk(T)-2$ and such that the (local) period map ${\mathfrak p }\colon\Def_M(\X)\to\Omega_M$ is a local isomorphism (see \cite{Dol96}). By gluing all these local deformation spaces, one obtains a moduli space $K_M$ of marked $M$-polarized IHS manifolds that is a non-separated 
analytic space, with a (global) period map $\mathfrak p \colon K_M\to\Omega_M$. 
To construct a period domain for Enriques manifolds, we   take the non-symplectic action into account. 
Let $(\X,j)$ be an $M$-polarized IHS manifold and $G=\langle g \rangle$ be a cyclic group of  
order $d\geq 2$ acting purely non-symplectically on $\X$. 
Assume that the action of $G$ on $j(M)$ is the identity
and that there exists a group homomorphism $\rho\colon G\to O(L)$
such that
$$
M=L^{\rho}:=\{x\in L\mid \rho(g)(x)=x,\forall g\in G\}.
$$
We define a {\it $(\rho, M)$-polarization} of $(\X,j)$ as the data of a marking $\eta\colon L\to H^2(\X,\IZ)$ such that $\eta_{|M}=j$ and $g^\ast=\eta\circ\rho(g)\circ\eta^{-1}$.  Two $(\rho, M)$-polarized IHS manifolds $(\X_1, j_1)$ and $(\X_2,j_2)$ are {\it isomorphic} if there are markings $\eta_1\colon L\to H^2(\X_1, \IZ)$ and $\eta_2\colon L\to H^2(\X_2,\IZ)$
such that $\eta_i{_{|M}}=j_i$, and an isomorphism $f\colon \X_1\to \X_2$ such that $\eta_1=f^*\circ \eta_2$.  Recall that by construction $\IC\sigma$ is identified with the line in $L_\mathbb{C}$ defined by $\eta^{-1}(H^{2,0}(\X))$.  Let $\lambda\in\IC^*$ be such that $\rho(g)(\sigma)=\lambda \sigma$. Observe that $\lambda\not= 1$ since the action is purely non-symplectic and $\lambda$ is a primitive $\supth{d}$ root of unity.  By construction $\sigma$ belongs to the eigenspace of $T_\IC$ relative to the eigenvalue $\lambda$, where $T=M^{\perp}\cap L$. We denote it by $T(\lambda)$ (if $d=2$, we have $\lambda=-1$ and we write $T(\lambda)=T_{\IR}(\lambda)\otimes \IC$, where $T_{\IR}(\lambda)$ is the real eigenspace relative to $\lambda=-1$ and remark that in this case $T(\lambda)=T_\IC$).  Assume that $\lambda\not= -1$; then the period belongs to the space
$$
\Omega_M^{\rho,\lambda}:=\{x\in \IP(T(\lambda))\mid q(x+\bar{x})>0\}
$$ 
of dimension $\dim T(\lambda)-1$, which is a complex ball if $\dim T(\lambda)\geq 2$. By using the fact that $\lambda\not=-1$, it is easy to check that every point $x\in \Omega_M^{\rho,\lambda}$  automatically satisfies the condition $q(x)=0$.  If $\lambda=-1$, we set $\Omega_M^{\rho,\lambda}:=\{x\in \IP(T(\lambda))\mid q(x)=0, \, q(x+\bar{x})>0\}$.  It has dimension $\dim T(\lambda)-2$. Clearly, in both cases $\Omega_M^{\rho,\lambda}\subset \Omega_M$.

Let $X$ be a {\it marked} Enriques manifold of index $d$, \textit{i.e.}~a {\it $(\rho, M)$-polarization} of $(\X,j)$ with a marking $\eta\colon L\to H^2(\X,\IZ)$ such that $\eta_{|M}=j$ and $g^\ast=\eta\circ\rho(g)\circ\eta^{-1}$, so that $X=\X/G$.  Recall that by the Bogomolov--Tian--Todorov theorem, the Kuranishi space $\Def(X)$ of an Enriques manifold $X$ is smooth. Moreover, after possibly shrinking $\Def(X)$, we have that every point in it parametrizes an Enriques manifold of the same index; see \cite[Proposition 1.2]{OS2}.  If $\mathcal X\to B$ is a flat family of marked Enriques manifolds over some simply connected complex space $B$, then, as remarked in \cite[Section 2]{OS2}, the universal covering $\mathcal \X\to \mathcal X$ of the family is also the fiberwise universal covering. The period map of the family $\mathcal X\to B$ is then defined as
\begin{equation}
{\mathfrak p}_B \colon B\lra \Omega_M^{\rho,\lambda},\quad b\longmapsto \eta^{-1}\left(H^{2,0}\left(\X_b\right)\right).
\end{equation}
By \cite[Theorem 2.4]{OS2} the local Torelli theorem holds; namely, the period map ${\mathfrak p}_B$ is a local isomorphism.  Hence, as for IHS manifolds, we can patch together the Kuranishi spaces via the Oguiso--Schr\"oer local Torelli theorem, to construct a (non-separated) moduli space of (marked) Enriques manifolds in terms of $(\rho, M)$-polarized IHS manifolds.  First, we prove the following.

\begin{lem}\label{lem:perp}
  We have $H^2(\X,\Q)= \pi^* [H^2(X,\Q)] \oplus [\pi^*(H^2(X,\Q)) ]^{\perp}$. In particular, the lattice  $\pi^* [H^2(X,\Z)]=H^2(\tilde X,\Z)^G$ is non-degenerate.
\end{lem}

\begin{proof} Clearly, both spaces $\pi^* [H^2(X,\Q)]$ and  $[\pi^*(H^2(X,\Q)) ]^{\perp}$
are contained in $H^2(\X,\Q)$. We need to show that their sum is direct. Let us take a non-zero class $\beta \in [\pi^*(H^2(X,\Q))]^{\perp}$.  Suppose towards a contradiction that
\begin{equation}\label{eq:1H}
  \beta \in \pi^* H^2(X,\Q). 
\end{equation} 
Then $q(\beta, \beta)=0$.  Let $B\in \Pic(X)$ be an ample line bundle, and consider the ample line bundle $A:=\pi^*B$ on $\X$. By the choice of $\beta$ and  $A$, we have that $q(\beta,A)=0$, which, by Equation~(\ref{eq:1H}), contradicts the Hodge index theorem for $q$ (notice that since $h^{2,0}(X)=0$, we have $\beta\in H^{1,1}(\X)$).
\end{proof}

We recall that a {\it very general Enriques manifold} is a point in the corresponding parameter space lying  outside a countable union of proper closed subsets. When the index is prime, the following holds.

\begin{prop}\label{prop:gen}  
 For a very general Enriques manifold $X$  of prime index $|G|=p$ with universal cover $\X$, we have  that $\Pic(\X)= H^2(\X,\Z)^G$. 
\end{prop}
 
\begin{proof}
Our argument is similar to that in \cite[Proposition 5.3 and Theorem 5.5]{BCS}. As in Section~\ref{S:moduli} we set $ M:=\eta^{-1} (H^2(\tilde{X}, \IZ)^G)$ and $T:=M^{\perp}\cap L$.  Let $T^+ \subset T\backslash \{0\}$ be the set of $t\in T\backslash \{0\}$ such that $H_t=\{\omega\in \Omega^{\rho,\lambda}_M\mid q( \omega, t)=0\}$ is a hyperplane section; more precisely, this is a proper subset of $\Omega^{\rho,\lambda}_M$.  Consider $\mathcal{H}:=\bigcup_{t\in T^+} H_{t}$. Each subset $\Omega^{\rho,\lambda}_M\setminus{H}_{t}$ is open and dense in $\Omega^{\rho,\lambda}_M$; hence by Baire's theorem the subset $\Omega^0:=\Omega^{\rho,\lambda}_{M}\setminus\mathcal{H}$ is dense in $\Omega^{\rho,\lambda}_M$ since~$\mathcal{H}$ is a countable union of complex proper closed subspaces.  We now take  a period $\omega \in \Omega^0$ and a marked IHS manifold $(\X,\eta)$ such that $\mathfrak p (\X,\eta)=\omega$. To lighten the notation, from now on we will omit the marking $\eta$. Then $\Pic(\tilde
X)=\{l\in L:q(\omega, l)=0\}$.  Observe that since $\omega \in T(\lambda)$, we have that $H^2(\tilde X,\IZ)^G\subset \Pic(\tilde X)$, as by construction $ M:=H^2(\tilde{X}, \IZ)^G$ is orthogonal to $T$. Moreover, by Lemma~\ref{lem:inc} we have the equality $\Pic(\tilde X)^G=H^2(\tilde X,\IZ)^G$.  We now want to show that
\begin{equation}\label{pic_int}
\left(\Pic\left(\tilde X\right)^G\right)^\perp \cap \Pic\left(\tilde X\right)=\{0\}.
\end{equation}
The fact that (\ref{pic_int}) implies that
$\Pic(\tilde X)=H^2(\tilde X,\IZ)^G$ follows from the equality  $L_\Q=(\Pic(\tilde X)^G)_\Q\oplus(\Pic(\tilde X)^G)^\perp_\Q$, which we proved in Lemma~\ref{lem:perp}. 
We now show Equality (\ref{pic_int}). Let $0\neq t\in (\Pic(\tilde X)^G)^\perp \cap \Pic(\tilde X)$. 
Then $t^\perp\subset L_{\C}$ cannot determine a hyperplane section of $\Omega^{\rho,\lambda}_M$ as we have taken $\omega \in \Omega^0$. In particular,  this implies that, for all $t\in  (\Pic(\tilde X)^G)^\perp \cap \Pic(\tilde X))$, the subspace $t^\perp$ contains  the eigenspace $T(\lambda)$ with respect to $\lambda$ for the action of $G$ on $T_\IC$. Hence 
$$
T(\lambda) \subset \left[\left(\Pic\left(\tilde X\right)^G\right)^\perp \cap \Pic\left(\tilde X\right)\right]^\perp _\C,
$$
which implies that
$$
T(\lambda) \subset \left(\Pic\left(\X\right)^G + \Pic\left(\X\right)^\perp\right)_\C. 
$$
Now write any $v_\lambda\in T(\lambda)$ as $v_\lambda=v_1+v_2$ with $v_1\in \Pic(\X)^G_\C$ and $v_2\in \Pic(\X)^\perp_\C\subset (\Pic(\X)^G)^\perp_\C$. By construction we also have $T(\lambda)\subset (\Pic(\X)^G)^\perp_\C$. So $v_\lambda-v_2=v_1$ is an element in $((\Pic(\X)^G)^\perp\cap (\Pic(\X)^G))_\C=\{0\}$ by Lemma~\ref{lem:perp}. This shows that $ T(\lambda) \subset \Pic(\X)^\perp_\C=:T_{\tilde X,\C} $, where the orthogonal complement of $\Pic(\tilde X)$ is the transcendental lattice $T_{\tilde X}$ of $\tilde X$.  This in particular implies
\begin{equation}\label{eq:contr}
    \left(\left(\Pic\left(\tilde X\right)^G\right)^\perp \cap \Pic\left(\tilde X\right)\right)_\IC\cap T(\lambda)=\{0\}; 
\end{equation}
otherwise, we would find a non-zero element in $\Pic(\tilde X)\cap T_{\tilde X}$, which  is not possible. 
Now by definition $G$ acts as the identity on $\Pic(\tilde X)^G=H^2(\tilde X,\IZ)^G$, while, by the hypothesis $|G|=p$, the eigenvalues of its  action on $ \left((\Pic(\tilde X)^G)^\perp \cap \Pic(\tilde X)\right)_\IC$ are primitive roots of unity. Since this intersection is a lattice, the characteristic polynomial is a power of the $\supth{p}$ cyclotomic polynomial (as the characteristic polynomial has integral coefficients).  This means that all the primitive roots of unity appear with the same multiplicity, which is different from zero since we are assuming that $t\neq 0$.  In particular, $((\Pic(\tilde X)^G)^\perp \cap \Pic(\tilde X))_\IC\cap T(\lambda)\neq \{0\}$, which contradicts Equation~(\ref{eq:contr}).  \color{black} So we must have that $(\Pic(\tilde X)^G)^\perp \cap \Pic(\tilde X)=\{0\}$, and this concludes the proof.
\end{proof}

\begin{rmk}
Observe that, if the integer $|G|=d$ is not a prime number, then  at all points $[\X]$ in the moduli space we have  $H^2(\X,\IZ)^G\subset \Pic(\X)$ by Lemma~\ref{lem:inc}, and we may or may not have  equality at the very general point;  
see the  discussion of the index 4 examples in Section~\ref{ss:notprime}. 
\end{rmk}

\section{Proof of Theorem~\ref{identity}}\label{sec:proof}
Throughout this section, $X$ denotes an arbitrary Enriques manifold, $\X$ its IHS universal cover, $G$ its fundamental group and $\pi\colon\X\to X=\X/G$ the quotient morphism.  We assume the hypothesis of Theorem~\ref{identity} and show that $\tilde\tau^*$ commutes with $g^*$, for any $\tilde\tau\in \Bir(\X)$.

\begin{lem}\label{lem:on-G-inv}
Each  $\tilde\tau\in \Bir(\X)$  
acts on  $H^2(\X,\Z)^G$ 
and commutes with the action of $g$ on it.
\end{lem}

\begin{proof} 
By assumption we know that $\Pic(\X)= H^2(\X,\Z)^G$.  Consider $\eta\in H^2(\X,\Z)^G$ and set $e:=(\tilde\tau)^*(\eta)$.  First notice that $e\in H^2(\X,\Z)^G$. Indeed, birational automorphisms preserve the Picard group; hence $e=(\tilde\tau)^*(\eta)\in \Pic(\X)=H^2(\X,\Z)^G$. Therefore, $g^*(e)=e$ and
$$
g^*((\tilde \tau)^*(\eta))=g^*(e)=e= (\tilde \tau)^*(\eta)= (\tilde\tau)^* (g^*(\eta)), 
$$
where the last equality holds since $\eta\in H^2(\X,\Z)^G$. 
\end{proof}

We then also check the commutativity on $T_\X:=(\pi^*H^2(X,\Z))^{\perp}\cap H^2(\X,\Z)$.

\begin{lem}\label{lem:on-transc} 
  Each  $\tilde\tau\in \Bir(\X)$
acts on  $T_\X$ and commutes with the action of $g$ on it. 
\end{lem}

\begin{proof}
Notice that $T_\X$ is the transcendental part of a weight 2 Hodge structure of $K3$ type. Hence by \cite[Chapter 3, Corollary 3.4]{Huy-K3} the group of all Hodge isometries of $T_\X$ is a finite cyclic group.  Birational automorphisms preserve $\Pic(\X)$, which, by hypothesis, equals $H^2(\X,\Z)^G$. Therefore, any element of $\Bir(\X)$ preserves $T_\X$ and induces a Hodge isometry of $T_\X$. Then the restrictions of $(\tilde\tau)^*$ and $g^*$ to $T_\X$ are elements of a finite cyclic group, and as such they commute.
\end{proof}

\begin{proof}[Proof of Theorem~\ref{identity}]
Consider $\xi\in \Nef^+(X)$. Then there exists an $\eta \in \Nef^+(\tilde X)$ such that $\xi=\pi_*(\eta)$ (take \textit{e.g.}~$\eta=(\pi^*\xi)/|G|$).  By \cite{AV1,AV2} there exists a fundamental domain $\D$ for the $\Aut(\X)$-action on $\Nef^+(\X)$, which is a rational polyhedral convex cone.  In particular, we have the existence of $\tilde\tau\in \Aut(\X)$ and $\tilde\delta\in \tilde D\cap \pi^*N^1(X)_\R=:D$ such that $(\tilde\tau)^*(\eta)= \tilde\delta$ (observe that $\tilde\delta\in \pi^*N^1(X)_\R$ as by hypothesis and by Lemma~\ref{lem:inc} we have $\pi^* N^1(X)_\mathbb{R}=H^2(\tilde X,\mathbb{Z})_\mathbb{R}^G=\Pic(\tilde X)_\mathbb{R}$, so $\tilde\tau $ sends $\eta$ to $\pi^* N^1(X)_\mathbb{R}$).  Notice that $D$ is a rational polyhedral convex cone, since $\tilde D$ is and $\pi^*N^1(X)_\R$ is a rational subspace.  Since $\Pic(\tilde X)=H^2(\tilde X,\IZ)^G$, we can then apply Lemmas~\ref{lem:on-G-inv},~\ref{lem:on-transc} and~\ref{lem:perp} to show that the isometry $\tilde\tau^*$ commutes with $g^*$ on $H^2(\X,\Z)$.  If the map $\Aut(\X)\rightarrow O(H^2(\X,\IZ))$ is injective, the proof is easier; we give it explicitly for the reader's convenience. Indeed, in this case $\tilde \tau$ and $g$  commute  on $\X$ as well, which by Equation~(\ref{eq:aut}) implies that $\tilde\tau$ descends to an automorphism $\tau$ on $X$ such that $ \tau\circ \pi=\pi \circ\tilde \tau.  $ So we get
$$
\tau^*(\xi)=\tau^* (\pi_*(\eta))=\pi_*\left(\left(\tilde\tau\right)^* (\eta)\right)=\pi_*\left(\tilde \delta\right)\in \pi_*(D).
$$
This means that $ \pi_*(D)$ is a  fundamental domain for the $\Aut (X)$-action on $\Nef^+(X)$, which moreover is a rational and polyhedral convex cone as $D$ is.
In general, if the map $\Aut(\X)\rightarrow O(H^2(\X,\IZ))$ is not injective, then by \cite[Proposition 9.1(v)]{Huy-basic} its kernel $K$ is finite. Consider 
$$
\Gamma:=\left\{\varphi \in \Aut\left(\X\right)\mid \varphi g^{-1}\varphi^{-1} g\in K\right\}, 
$$
and observe that $\Gamma$ is in fact equal to $\Aut(\X)$ as we have shown that
 each automorphism induces an isometry on $H^2(\X,\IZ)$ that commutes  with $g$. Consider 
 $$
 \Gamma_0:=\left\{\varphi \in \Aut\left(\X\right)\mid \varphi g^{-1}\varphi^{-1} g=\id \right\}
 $$ 
 and notice that $\Gamma_0$ is a subgroup of  $\Aut(\X)$ which is not necessarily normal. Let us show that it has finite index in $\Aut(\X)$. Consider the map 
 $$
 f \colon \Aut\left(\X\right)\longrightarrow K, \quad \varphi\longmapsto \varphi g^{-1}\varphi^{-1} g.
 $$
Then one easily checks that $f(\varphi_1)=f(\varphi_2)$ if and only if $\varphi_2^{-1}\varphi_1\in \Gamma_0$. Therefore, if we take the set $\Aut(\X)/\Gamma_0$ of left cosets, we have an injective map $\Aut(\X)/\Gamma_0\rightarrow K$ showing that $\Gamma_0$ has finite index in $\Aut(\X)$.  Now let  $\bar \gamma_1,\ldots,\bar \gamma_r$ be all the classes in $\Aut(\X)/\Gamma_0$ and $\varphi$ be an automorphism such that $\varphi^*(\pi^*\xi)\in \tilde D$. By considering the class $\bar\varphi\in\Aut(\X) / \Gamma_0$, we then have that $\bar \varphi=\bar \gamma_j$ for a certain $j=1,\ldots,r$ such that $\gamma_j^{-1}\varphi\in\Gamma_0$. We then modify the fundamental domain $\tilde D$ by taking
$$
\tilde D ':=\tilde D \cup \bigcup_{i=1}^r \left(\gamma_i^{-1}\right)^*\left(\tilde D\right).
$$
Observe that $\tilde D '$  is obviously still rational and polyhedral and $(\gamma_j^{-1})^*\varphi^*(\pi^*\xi)\in \tilde D'$. Moreover, $\gamma_j^{-1}\varphi$ descends to an automorphism on $X$, and we set $\tilde D'\cap \pi^*N^1(X)_{\mathbb R}=:D'$. 
If $\Aut^*(X)$ denotes the image of $\Aut(X)$ inside $\GL(N^1(X)_ {\mathbb R})$, then $(\Nef^+(X), \Aut^*(X))$ is of polyhedral type (see Section~\ref{subsec:cones}): it suffices to take $\Pi$ to be  the convex hull of $ \pi_*(D')$ inside $N^1(X)_{\mathbb R}$. Then by Looijenga's result (Lemma~\ref{lem:loo}) applied to $(\Nef^+(X), \Aut^*(X))$, we deduce the existence of a rational polyhedral cone which is a  fundamental domain for the $\Aut(X)$-action on $\Nef^+(X)$.

For the existence of a  fundamental domain for the $\Mov^+(X)$-action on $\Bir(X)$, the same arguments apply, using Equation~(\ref{eq:bir}) instead of Equation~(\ref{eq:aut}), the birational cone conjecture for IHS manifolds proved in \cite{Mark-survey} and noticing that the kernel of $\Bir(\X)\to O(H^2(\X, \mathbb Z))$ is again finite, by \cite[Proposition 9.1(iv) and (v)]{Huy-basic}. 
\end{proof}

\begin{rmk}
Observe that when the map $\Aut(\X)\rightarrow O(H^2(\X,\IZ))$ is injective, we have shown that the centralizer of $G$ in $\Aut(\X)$ (resp.\ $\Bir(\X)$) is exactly equal to $\Aut(\X)$ (resp.\ to $\Bir(\X)$), so   $\Aut(X)$ (resp.\ $\Bir(X)$) is the quotient of $\Aut(\X)$ (resp.\ of $\Bir(\X)$) by $G$.
\end{rmk}

\section{Further remarks and results}

In Section~\ref{ss:notprime} we recall from \cite{BNWS,OS1} the constructions of all the currently known examples of Enriques manifolds 
and prove that for these examples the fundamental group acts trivially on the Picard group of the  universal cover. In Section~\ref{sec:4.2} we check this condition for certain indices~$d$ (some of which even non-prime) at all points of the moduli space. 

\subsection{The cone conjecture for the known examples}\label{ss:notprime}
We start with the index 2 examples. 

\subsubsection*{Index 2: A quotient of the Hilbert scheme}
Let $S$ be a K3 surface with an Enriques involution $\iota$. For a generic such K3 surface it is well known that $\Pic(S)=H^2(S,\IZ)^\iota=U(2)\oplus E_8(2)$. The quotient $S^{[n]}/\langle \iota^{[n]}\rangle$, for $n$ odd, is an Enriques manifold of index $2$. Recall that $\Pic(S^{[n]})=\Pic(S)\oplus \Z \delta$, where $\delta$ is half of the class of the exceptional divisor on $S^{[n]}$. Since $\iota^{[n]}$ is a natural automorphism, its action on $\delta$ is the identity, and the action on $\Pic(S)$ is the same as the action of $\iota$, so  $\Pic(S^{[n]})=H^2(S^{[n]},\IZ)^{\iota^{[n]}}$. In other words, we have the following. 

\begin{prop}\label{prop:ind2_hilb}
Let $S^{[n]}/\langle \iota^{[n]} \rangle$ be the index 2 Enriques manifold recalled above.  Then $\Pic(S^{[n]})=H^2(S^{[n]},\IZ)^{\iota^{[n]}}$.
\end{prop}

\subsubsection*{A quotient of a generalized Kummer}
Let $A=E\times F$ be the product of two elliptic curves, and assume that $n$ is odd so that we can write $2m=n+1$ for an integer $m$. For $E,F$  very general elliptic curves, we have $\rk\Pic(E\times F)=2$; indeed, 
\begin{equation}\label{eq:Pic}
\Pic(E\times F )=\Pic(E)\times \Pic(F)\times \Hom(\Jac(E),\Jac(F)),
\end{equation} 
and for the very general choice of $E$ and $F$,  we have 
\begin{equation}\label{eq:Fgen}
\Hom(\Jac(E),\Jac(F))=\{0\}.
\end{equation}
Now consider  $a:=(a_1,a_2)$, where $a_2\in F$ is a $2$-torsion point and $a_1\in E[n+1]$ is such that $m a_1\not\in \IZ\oplus t\IZ$ (where $E=\IC/\IZ\oplus t\IZ$). Let $t_a$ be the translation by the point $a$ on $E\times F$ and $h_2=\diag(-1, 1)$ the morphism given by the multiplication by $-1$ on the first component and the identity on the second. Set $\psi_2:=t_a\circ h_2$. Then $\psi_2^{[n]}$ does not have fixed points on $\Km_n(E\times F)$, as shown in \cite[Section 6]{OS1}. 

\begin{prop}\label{prop:ind2} Let 
$\Km_n(E\times F)/\langle \psi_2^{[n]} \rangle$ be the index~$2$ Enriques manifold recalled above.
    Then we have $\Pic(\Km_n(E\times F))= H^2(\Km_n(E\times F), \IZ)^{\psi_2^{[n]}}$.
\end{prop}

\begin{proof}
    For $(z,w)\in E\times F$ we have $t_a(h(z,w))=(-z+a_1,w+a_2)$. Let us now compute the action in cohomology. Recall that $H^2(A,\IZ)=U\oplus U\oplus U$, and consider $H^2(A,\IR)$. If we write $z=z_1+iz_2$ for the coordinate on $E$ and $w=w_1+iw_2$ for the coordinate on $F$, then $H^2(A,\IR)$ is generated by the 2-forms
    $$
    dz_1\wedge dw_1,\; dz_1\wedge dw_2,\; dz_2\wedge dw_1,\; dz_2\wedge dw_2,\; dz_1\wedge dz_2,\; dw_1\wedge dw_2.
    $$
    A translation acts trivially in cohomology, and the multiplication by $-1$ acts by sending $z_1+iz_2$ to $-z_2-iz_1$, so the image under $\psi_2^*$ of the previous basis is
$$\psi_2^*(dz_1\wedge dw_1)=-dz_1\wedge dw_1,\; \psi_2^*(dz_1\wedge dw_2)=-dz_1\wedge dw_2,\; \psi_2^*(dz_2\wedge dw_1)=-dz_2\wedge dw_1, $$ 
$$\psi_2^*(dz_2\wedge dw_2)=-dz_2\wedge dw_2,\; \psi_2^*(dz_1\wedge dz_2)=dz_1\wedge dz_2,\; \psi_2^*(dw_1\wedge dw_2)=dw_1\wedge dw_2.$$ 
Hence the matrix of $\psi_2^*$ in this basis is
$$
\left(
\begin{array}{rrrrrr}
-1&0&0&0&0&0\\
0&-1&0&0&0&0\\
0&0&-1&0&0&0\\
0&0&0&-1&0&0\\
0&0&0&0&1&0\\
0&0&0&0&0&1\\
\end{array}
\right).
$$
The eigenvalues of this matrix are $-1$ with multiplicity $4$ and $1$ with multiplicity $2$. This means that the invariant part for the action of $\nu$ on the cohomology of $A$ is precisely $\Pic(E\times F)$.  Consider the induced natural automorphism $\psi_2^{[n]}$ on $\Km_n(E\times F)$. By the choice of $a_1$ and  $a_2$, the  induced automorphism $\psi_2^{[n]}$ acts freely on $\Km_n(E\times F)$ (see \cite[Theorem 6.4]{OS1}).
From the above description, we have that the invariant part for the action on the second cohomology of $\Km_n(E\times F)$ with integral coefficients of the induced automorphism $\psi_2^{[n]}$  is  exactly the Picard group (since the automorphism $\psi_2^{[n]}$ is natural, it acts as the identity on the exceptional divisor). 
\end{proof}

\subsubsection*{Index 3: A quotient of a generalized Kummer} Take  integers $m,n$ such that $3m=(n+1)$. Let $E_\omega$ be an elliptic curve with complex multiplication by $\omega$, which is a primitive $\suprd{3}$ root of unity, and let $F$ be another elliptic curve.  Set $A:=E_\omega\times F$.  For $F$ a very general elliptic curve we have $\rk\Pic(E_\omega\times F)=2$, by (\ref{eq:Pic}) and (\ref{eq:Fgen}). Now consider $a:=(a_1,a_2)$, where $a_2\in F$ is a $3$-torsion point and $a_1\in E_\omega[n+1]$ is such that $m a_1(2+\omega)\not\in \IZ+\omega \IZ$.  Let $t_a$ be the translation by the point $a$ on $E_\omega\times F$ and $h_3=\diag(\omega, 1)$ the morphism given by the multiplication by $\omega$ on the first component and the identity on the second. Set $\psi_3:=t_a\circ h_3$. Then $\psi_3^{[n]}$ does not have fixed points on $\Km_n(E_\omega\times F)$, as shown in \cite[Section 4.2]{BNWS} and \cite[Section 6]{OS1}.

\begin{prop}\label{prop:ind3} Let 
$\Km_n(E_\omega\times F)/\langle \psi_3^{[n]} \rangle$ be the index 3 Enriques manifold recalled above.
    Then
        $\Pic(\Km_n(E_\omega\times F))= H^2(\Km_n(E_{\omega}\times F),\IZ)^{\psi_3^{[n]}}$.
\end{prop}

\begin{proof}
To show that $\psi_3^{[n]}$ acts as the identity on $\Km_n(E_\omega\times F)$, we argue here in a different way than in the index 2 example.  Observe that $E_\omega\times F$ is projective, so  it contains an invariant ample class. If the eigenspace relative to $\omega$ were in $\Pic(E_\omega\times F)_\mathbb{C}$, then the same would be true for $\bar \omega$.  Since $\rk\Pic(E_\omega\times F)=2$, this would lead to a contradiction,  so $\psi_3$ acts as the identity on the Picard group. Now $\psi_3^{[n]}$ acts as the identity on the exceptional divisor on $\Km_n(E_\omega\times F)$. This means that the action of $\psi_3^{[n]}$ on $\Pic(\Km_n(E_\omega\times F))$ is the identity.
\end{proof}

\subsubsection*{Index 4: A quotient of a generalized Kummer} 
Take integers $m,n$ such that $4m=(n+1)$. Let $E_i$ be an elliptic curve with complex multiplication by $i$, and let $F$ be another elliptic curve. Set $A:=E_i\times F$.  For $F$ a very general elliptic curve, we have $\rk\Pic(E_i\times F)=2$, by Equations~(\ref{eq:Pic}) and (\ref{eq:Fgen}). Now consider $a:=(a_1,a_2)$, where $a_2\in F$ is a $4$-torsion point and $a_1\in E_i[n+1]$ is such that $2ma_1(1+i)\not\in \mathbb \Z+i\Z$.  Let $t_a$ be the translation by the point $a$ on $E_i\times F$ and $h_4=\diag(i , 1)$ the morphism given by the multiplication by~$i$ on the first component and the identity on the second. Set $\psi_4:=t_a\circ h_4$. Then $\psi_4^{[n]}$ does not have fixed points on Kum$_n(E_i\times F)$, as shown in \cite[Section 4.2]{BNWS} and \cite[Section 6]{OS1}.

\begin{prop}\label{prop:ind4}
Let $\Km_n(E_i\times F)/\langle \psi_4^{[n]} \rangle$ be the index 4 Enriques manifold recalled above.  Then we have $\Pic(\Km_n(E_i\times F))= H^2(\Km_n(E_i\times F),\IZ)^{\psi_4^{[n]}}$.
\end{prop}

\begin{proof}
    For $(z,w)\in E_i\times F$, we have $t_a(h(z,w))=(iz+a_1,w+a_2)$. Let us now compute the action in cohomology. We use the same set of generators of $H^2(A,\IR)$ as in the index 2 case.  A translation acts trivially in cohomology, and the multiplication by $i$ acts by sending $z_1+iz_2$ to $-z_2+iz_1$, so  the image under $\psi_4^*$ of the previous basis is
$$\psi_4^*(dz_1\wedge dw_1)=-dz_2\wedge dw_1,\; \psi_4^*(dz_1\wedge dw_2)=-dz_2\wedge dw_2,\; \psi_4^*(dz_2\wedge dw_1)=dz_1\wedge dw_1, $$ 
$$\psi_4^*(dz_2\wedge dw_2)=dz_1\wedge dw_2,\; \psi_4^*(dz_1\wedge dz_2)=dz_1\wedge dz_2,\; \psi_4^*(dw_1\wedge dw_2)=dw_1\wedge dw_2.$$ 
Hence the matrix of $\psi_4^*$ in this basis is
$$
\left(
\begin{array}{rrrrrr}
0&0&1&0&0&0\\
0&0&0&1&0&0\\
-1&0&0&0&0&0\\
0&-1&0&0&0&0\\
0&0&0&0&1&0\\
0&0&0&0&0&1\\
\end{array}
\right).
$$
The eigenvalues of this matrix are $i$ and $-i$ with multiplicity $2$ and $1$ with multiplicity 2. This means that the invariant part for the action of $\psi_4$ on the cohomology of $A$ is precisely $\Pic(E_i\times F)$.  Consider the induced natural automorphism $\psi_4^{[n]}$ on $\Km_n(E_i\times F)$. By the choice of $a_1$ and $a_2$, the  induced automorphism $\psi_4^{[n]}$ acts freely on $\Km_n(E_i\times F)$ (see \cite[Theorem 6.4]{OS1}).
From the above description, we have that the invariant part for the action of the induced automorphism $\psi_4^{[n]}$ on the second cohomology of $\Km_n(E_i\times F)$ with integral coefficients  is  exactly the Picard group (since the automorphism $\psi_4^{[n]}$ is natural, it acts as the identity on the exceptional divisor).
\end{proof}

\begin{rmk}
Observe that the argument that we used for the index 3 case does not apply for the index 2 and the index 4 cases, because $-1$ could be an eigenvalue of $\psi_2$, respectively $\psi_4$, and so we cannot deduce immediately that the action of the automorphism is the identity on $\Pic(A)$.
\end{rmk}

\subsection{The cone conjecture for other possible cases}\label{sec:4.2}
We start by recalling a general result stated in \cite[Proposition 2.4]{OS1}. 

\begin{prop}\label{prop:d}
Let $X$ be an Enriques manifold of dimension $\dim(X)=2n$. Then the index $d$ of\, $X$ divides $n+1$.
\end{prop}

We now  discuss Enriques manifolds that may arise from all the known examples of IHS manifolds.

\subsection*{$\boldsymbol{K3^{[n]}}$ and $\boldsymbol{\Km_n}$}
Consider an Enriques manifold $\X\to X=\X/G$ such that $\X$ is a deformation of $K3^{[n]}$ or of $\Km_n$. By Proposition~\ref{prop:d} the order $d$ of the group $G=\langle g \rangle$ divides $n+1$. Recall that the second Betti numbers of $K3^{[n]}$ and of $\Km_n$ manifolds are, respectively, $23$ and $7$. Now since some  primitive $\supth{d}$ roots of unity are eigenvalues of the matrix of the action of $g^*$ on $H^2(\X,\IC)$ (see Section~\ref{sec:2.3}),  the Euler totient function must satisfy $\varphi(d)\leq 22$,  respectively $\varphi(d)\leq 6$ (in fact,  in general for an IHS manifold $\X$ we have $\varphi(d)\leq b_2(\X)-1$). A list of possible $d$ is given in \cite[Proposition 2.9]{OS1}; however, notice that the authors missed the cases $d=48,\,60$ for $K3^{[n]}$, and they erroneously included $d=24$ for $\Km_n$. Indeed, this is not possible since $\varphi(24)=8$, which is bigger than $b_2(\X)=7$. For convenience we recall in the next proposition all the possible values of $d$ and explain when the cone conjecture holds.

\begin{prop}\label{prop:other} Let $\X\to X=\X/G$ be an Enriques manifold quotient of an IHS manifold $\X$. 
\begin{enumerate}[label={\rm(\alph*)}, ref={\rm\alph*}]
\item\label{p:other-a} If\, $\X$ is of\, $K3^{[n]}$ type, then, see \cite[Proposition 2.9]{OS1}, $d:=|G|\leq 66$ and
$$
d\in\{2,3,4,5, 6, \ldots,27, 28,30, 32,33,34,36,38,40,42,44,46,48,50,54,60,66 \} . 
$$ 
For any such Enriques manifold of index $d\in\{13,17,19,23,46\}$, 
we have that $G$ acts as the identity on $\Pic(\X)$.
\item\label{p:other-b} If\, $\X$ is of\, $\Km_n$ type, then, see \cite[Proposition 2.9]{OS1},  $d:=|G|\leq 18$ and
$$d\in\{2,3,4,5,6,7,8,9,10,12,14,18\}. $$
For any such Enriques manifold of index $d\in\{5,7,9,14,18\}$, we have that $G$ acts as the identity on $\Pic(\X)$.
\end{enumerate}
\end{prop}

\begin{proof}
  \eqref{p:other-a}~ For order $d\in\{13,17,19,23\}$ observe that $g^*$ cannot act on $\Pic(\X)_{\IC}$ with primitive roots of unity since the rank is too small: we discuss here the case $d=13$ in details, the other cases being similar.  Since the eigenvalues of the automorphism $g^*$ on $T_{\tilde X}\otimes\IC$ are the primitive roots of unity, we have that $\rk (T_{\tilde X})\geq 12$ (recall that if a primitive root of unity is an eigenvalue for the action of $g^*$ on $T_{\tilde X}\otimes\mathbb C$, then all the others primitive roots are eigenvalues too, with the same multiplicity). This implies that $\rk\Pic(\X)\leq 11$. Now if $g^*$ had an eigenvalue which is a primitive root of unity that is also an eigenvalue  for its action on $\Pic(\X)_{\mathbb C}$, then $\rk\Pic(\X)\geq 13$ (since $\Pic(\X)$ also contains an invariant ample class), but this is in contradiction with the previous inequality.
 
For order $46$ we have $\varphi(46)=22$, and by \cite[Proposition 6(ii)]{Beau-katata} we have that the Picard number of $\X$ is~$1$.  Now we know that $\X$ always contains an ample invariant class, so  the action of $G$ is trivial on $\Pic(\X)$.
  
\eqref{p:other-b}~ For order $d\in\{5,7\}$ observe that $g$ cannot act on $\Pic(\X)_{\IC}$ with some primitive roots of unity since the rank is too small; the argument is the same as in part~\eqref{p:other-a}. For orders $9$, $14$ and $18$ we have $\varphi(9)=6$, $\varphi(14)=6$ and $\varphi(18)=6$, and again by \cite[Proposition 6(ii)]{Beau-katata} we remark that the rank of the Picard group is forced to be equal to~$1$, and we are done as there is always an ample invariant class.
\end{proof} 

\begin{rmk}
In the case where $d=3$ and $\X$ is of $K3^{[2]}$-type, notice that by  \cite[Table 1]{BCS}  a non-symplectic automorphism of order $3$ on a $K3^{[2]}$-type manifold never has  empty fixed locus. So one cannot use  $K3^{[2]}$-type manifolds to construct Enriques manifolds of index $3$. Nevertheless,  as soon as we consider $K3^{[n]}$-type manifolds, with bigger $n$ so that $n+1$ is divisible by $3$,  this may \textit{a priori} be possible.
\end{rmk}

\subsection*{The O'Grady examples}
The two examples of O'Grady $\OG{10}$ and $\OG{6}$ are $10$- and $6$-dimensional, so by Proposition~\ref{prop:d}, to produce Enriques manifolds, we have to take the quotient by a fixed-point-free automorphism of order $2$, $3$ or $6$, respectively of order $2$ or $4$. Recently Billi, Giovenzana, Giovenzana and Grossi \cite{BGGG} have shown the non-existence of Enriques manifolds from $\OG{10}$ manifolds. The possibility of obtaining Enriques manifolds from $\OG{6}$ manifolds remains open.

We now combine the results presented  previously to prove Theorem~\ref{thm:main}. 

\begin{proof}[Proof of Theorem~\ref{thm:main}]
Items~\eqref{t:main-1},~\eqref{t:main-2} and~\eqref{t:main-3}  correspond, respectively, to Propositions~\ref{prop:gen},~\ref{prop:ind4} and~\ref{prop:other}. The nef and movable cone conjectures then follow from Theorem~\ref{identity}. 
\end{proof}


\providecommand{\doi}[2][]{\href{https://doi.org/#2}{{\nolinkurl{doi:#2}}}}
\newcommand{\etalchar}[1]{$^{#1}$}

\end{document}